\newtheorem{introthm}{Theorem}
\newtheorem{thm}{Theorem}[section]
\newtheorem{lemma}[thm]{Lemma}
\theoremstyle{definition}
\theoremstyle{remark}
\DeclareMathOperator{\re}{Re}
\DeclareMathOperator{\id}{id}
\DeclareMathOperator{\Aut}{Aut}
\DeclareMathOperator{\alg}{alg}
\newcommand{\twoone}{II$_1$ }
\newcommand{\cM}{\mathcal{M}}
\newcommand{\cU}{\mathcal{U}}
\newcommand{\N}{\mathbb{N}}
\newcommand{\R}{\mathbb{R}}
\newcommand{\Z}{\mathbb{Z}}
\newcommand{\fc}{\mathfrak{c}}
\DeclarePairedDelimiter{\norm}{\lVert}{\rVert}
\DeclarePairedDelimiter{\ip}{\langle}{\rangle}
\begin{document}

\title{Elementary embeddings into ultrapower $\mathrm{II}_1$ factors without a UCP lift}

\author{David Gao}
\address{Department of Mathematical Sciences, UCSD, 9500 Gilman Dr, La Jolla, CA 92092, USA}
\email{weg002@ucsd.edu}
\urladdr{https://sites.google.com/ucsd.edu/david-gao/home}

\author{David Jekel}
\address{Department of Mathematical Sciences, University of Copenhagen, Universitetsparken 5, 2100 Copenhagen {\O}, Denmark}
\email{daj@math.ku.dk}
\urladdr{https://davidjekel.com}

\maketitle

\begin{abstract}
We show that there are $\mathrm{II}_1$ factors $M$ and elementary embeddings $M \to M^{\cU}$ which do not lift to sequences of UCP maps, and in fact $M$ can be chosen from any given elementary equivalence class.  Furthermore, under continuum hypothesis, we show that in the sense of cardinality ``most'' automorphisms of a ultrapower $M^{\cU}$ of a separable $\mathrm{II}_1$ factor do not lift to a sequence of UCP maps $\varphi_n: M \to M$.
\end{abstract}

\section{Introduction}

The significance of ultrapowers in von Neumann algebras has been recognized since McDuff's use of central sequence algebras in \cite{McD69a,McD69b} and was a large motivation for the development of continuous model theory for von Neumann algebras \cite{FHS2014a,FHS2014a}.  Ultrapowers are a powerful tool because many phenomena that happen approximately on $M$ can happen exactly in $M^{\cU}$, and the exact versions are much easier to manipulate.  Conversely, what happens in $M^{\cU}$ must happen approximately in $M$, as every element in $M^{\cU}$ lifts to a sequence in $M$, but it is a delicate and important question how much of the $*$-algebra structure can be preserved by the lifted sequences.  For instance, the recent approach to classification of $*$-homomorphisms between $\mathrm{C}^*$-algebras relies on lifting $*$-homomorphisms into ultraproducts or sequence algebras to $*$-homomorphisms (not merely asymptotic $*$-homomorphisms) into the algebra itself (see \cite[\S 6]{classification}).

It is well-known that a $*$-homomorphism from the hyperfinite $\mathrm{II}_1$ factor $R$ into an ultraproduct of $\mathrm{II}_1$ factors lifts to a sequence of $*$-homomorphisms, that is, if $\varphi: R \to \prod_{n \to \cU} M_n$ is a $*$-homomorphism, there exist $*$-homomorphisms $\varphi_i: R \to M_n$ such that $\varphi(x) = [\varphi_n(x)]_n$ for all $x \in R$.  Atkinson and Kunnawalkam Elayavalli \cite[Theorem 3.5]{AKE2021} showed conversely that if $M$ is a $\mathrm{II}_1$ factor that is Connes-embeddable (i.e.\ it embeds into $R^{\cU}$) and if every $*$-homomorphism $M \to M^{\cU}$ lifts to a sequence of $*$-homomorphisms, then $M \cong R$.  The same is true if the lifting is allowed to be a general UCP map rather than a $*$-homomorphism \cite[Theorem 3.7]{AKE2021}.

The question of whether an embedding $M \to \prod_{n \to \cU} M_n$ lifts is closely related to the question of classifying such embeddings up to various kinds of equivalence.  Indeed, the reason that embeddings of $R$ into ultraproducts of $\mathrm{II}_1$ factors lift is that any two embeddings of $R$ into $\prod_{n \to \cU} M_n$ are unitarily conjugate, so in particular any embedding is unitarily conjugate to a diagonal embedding, which lifts.  Jung \cite{Jun07} showed the remarkable converse that if any two embeddings of a Connes-embeddable $\mathrm{II}_1$ factor $M$ into $R^{\cU}$ are unitarily conjugate, then $M \cong R$.  In fact, Ozawa showed that in fact, for a Connes-embeddable $\mathrm{II}_1$ factor $M$, separability of the space of embeddings into $R^{\cU}$ up to unitary equivalence is enough to conclude that $M \cong R$.  Atkinson and Kunnawalkam Elayavalli \cite[Theorem 4.7]{AKE2021} strengthened Jung's result by replacing unitary conjugacy with UCP conjugacy; they showed that $M \cong R$ provided that $M$ is Connes-embeddable and for any two embeddings $\alpha, \beta: M \to R^{\cU}$ we have $\beta = \varphi \circ \alpha$ where $\varphi: R^{\cU} \to R^{\cU}$ is a linear map of the form $\varphi([x_n]_{n \in \N}) = [\varphi_n(x_n)]_{n \in \N}$ for unital completely positive maps $\varphi_n: R \to R$.

Moreover, Atkinson, Goldbring, and Kunnawalkam Elayavalli \cite[Theorem 3.2.1]{AGKE22} showed that if $M$ is Connes-embeddable and any two embeddings of $M$ into $M^{\cU}$ are conjugate by an automorphism of $M^{\cU}$, then $M \cong R$.  The classification of embeddings up to automorphic conjugacy (as opposed to unitary conjugacy for instance) is inherently model-theoretic, since two embeddings $\alpha, \beta: M \to N^{\cU}$ are automorphically conjugate if and only if the images of chosen generators have the same type in $N^{\cU}$ (provided that $M$ and $N$ are separable, $\cU$ is an ultrafilter on $\N$, and we assume the continuum hypothesis; see \cite[Fact 2.2.2]{AGKE22} or \cite[Corollary 16.6.5]{Farah2019}).

Our goal in this note is to illustrate the difference between automorphic conjugacy and UCP conjugacy (hence also unitary conjugacy), answering a question posed by Kunnawalkam Elayavalli in private correspondence.  We first give fairly explicit examples of elementary embeddings (in the sense of continuous model theory) that do not admit UCP lifts.  Our construction (see \S \ref{sec: explicit construction}) leverages the rigidity of property (T) groups, similarly to many other counterexamples for related questions (e.g.\ \cite{OzawaUnivSep}, \cite[\S 6]{Brown2011}, \cite[\S 5.5]{GJNS2022}, and \cite{Farah2023}).

\begin{introthm} \label{thm: main 2}
Fix a free ultrafilter $\cU$ on $\N$.  Let $M$ be a separable \twoone factor.  Then there exists a separable \twoone factor $N$ elementarily equivalent to $M$ and an elementary embedding $N \to N^{\cU}$ that does not admit a UCP lifting.
\end{introthm}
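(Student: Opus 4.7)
My strategy is to exploit Kazhdan's property (T), in the spirit of the rigidity-based counterexamples cited in the introduction.

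I begin by fixing a residually finite ICC group $\Gamma$ with property (T), for instance $\Gamma = \mathrm{SL}_3(\Z)$, and setting $Q = L(\Gamma)$, a separable \twoone factor with Connes-Jones property (T). Residual finiteness yields a trace-preserving embedding $Q \hookrightarrow R$ via finite quotients, and since $R$ embeds into $M^{\cU}$ for every separable \twoone factor $M$, we obtain $Q \hookrightarrow M^{\cU}$. Applying downward L\"owenheim-Skolem, I extract a separable elementary submodel $N \preceq M^{\cU}$ containing this copy of $Q$. Then $N \equiv M$, and $N$ comes equipped with a designated property~(T) subfactor $Q \subseteq N$.

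The heart of the argument is to construct an elementary embedding $\sigma : N \to N^{\cU}$ whose restriction to $Q$, together with the placement of $\sigma(Q)$ inside $\sigma(N) \subseteq N^{\cU}$, obstructs every UCP lift. Using $\aleph_1$-saturation of $N^{\cU}$ and a back-and-forth argument, one can freely realize the type of (a generating tuple of) $N$ in $N^{\cU}$ to produce elementary embeddings; the delicate part is choosing this realization so that the joint type of $(\sigma(Q), \sigma(N))$ cannot be approximated by asymptotic UCP sequences of maps $N \to N$. The construction exploits the continuum of pairwise distinct characters of $\Gamma$ coming from its finite quotients, which, when realized as embeddings $Q \hookrightarrow R \hookrightarrow N^{\cU}$, give enough room inside $N^{\cU}$ to carve out a ``twisted'' copy of $Q$ not approximable by $*$-homomorphisms into $N$.

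Once such a $\sigma$ is in hand, I rule out UCP lifts using property~(T) as a rigidity lever. A putative UCP lift $\sigma_n : N \to N$ restricts to UCP maps $\sigma_n|_Q : Q \to N$ whose ultraproduct is the $*$-homomorphism $\sigma|_Q$. The operator-algebraic form of property (T), going back to Connes-Jones and Jolissaint, forces near-multiplicative UCP maps from $Q$ to be close in $\|\cdot\|_2$ to honest $*$-homomorphisms $\rho_n : Q \to N$ with the same ultraproduct, and these $\rho_n$ constrain the placement of $\sigma(Q)$ inside $\sigma(N)$ in a manner incompatible with the twisted construction of the previous step. The main obstacle of the proof lies precisely in that twisted construction: since any single embedding $Q \to N^{\cU}$ is unitarily conjugate in $N^{\cU}$ to the diagonal inclusion by a unitary lifting to $N$, the obstruction cannot live in the isolated restriction $\sigma|_Q$, but must instead be encoded in the joint type of the pair $(Q, N)$ as subalgebras of $\sigma(N)$. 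Arranging for this joint type to be realized by a non-approximable embedding in $N^{\cU}$ is where I expect the bulk of the technical work to go.
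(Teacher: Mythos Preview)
Your setup through downward L\"owenheim--Skolem matches the paper, but the proposal stalls precisely where the paper is concrete. You leave the construction of $\sigma$ to an unspecified saturation argument and concede that ``the bulk of the technical work'' still lies ahead; the paper instead writes $\sigma$ down directly. Having arranged $M \subseteq N \preceq M^{\cU}$ with the copy of $L(G)$ inside $N$ built from the finite quotients $\pi_n: G \to G_n = SL_3(\Z/n\Z)$, so that $g \mapsto [\iota_n(\pi_n(g))]_n$, one simply takes $\sigma$ to be the composite $N \hookrightarrow M^{\cU} \hookrightarrow N^{\cU}$. No type-realization or back-and-forth is needed, and no continuum hypothesis enters.

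Your diagnosis of where the obstruction lives is also off, and it steers you away from the actual mechanism. The assertion that ``any single embedding $Q \to N^{\cU}$ is unitarily conjugate to the diagonal inclusion'' is false for $Q = L(SL_3(\Z))$; this kind of uniqueness characterizes $R$, not property~(T) factors. In fact the paper's obstruction \emph{is} located in $\sigma|_{L(G)}$, together with the placement of the finite-dimensional algebras $L(G_n)$ inside $N$. By construction, the $n$th coordinate of $\sigma(g)$ lies in $\iota_n(L(G_n)) \subseteq M \subseteq N$. A hypothetical UCP lift $(\varphi_n)_n$ of $\sigma$, composed with the conditional expectations $E_M$ and $E_{\iota_n(L(G_n))}$, produces UCP maps $\psi_n: L(G) \to L(G_n)$ with $\|\psi_n(g_j) - \pi_n(g_j)\|_2 \to 0$ on a Kazhdan set. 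Since $L(G_n)$ is finite-dimensional one may perturb $\psi_n$ to a \emph{normal} UCP map, and then a property~(T) estimate on the associated correspondence forces $\ker(\id_G) = \{e\}$ to have finite index in $\ker(\pi_n)$, contradicting $|G| = \infty$. Thus the ``twisted joint type'' machinery you anticipate is unnecessary: the obstruction is already encoded in the specific finite-quotient embedding of $L(G)$ into $N$, and your Connes--Jones rigidity step, while correct in spirit, is replaced by this sharper kernel comparison.
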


Note that under continuum hypothesis (CH), the embedding $\iota$ from Theorem \ref{thm: main 2} is automorphically conjugate to the diagonal embedding $\Delta_N: N \to N^{\cU}$, or $\iota = \alpha \circ \Delta_N$ for some $\alpha \in \Aut(N^{\cU})$, since any two elementary embeddings have the same type.  However, they cannot be UCP conjugate; indeed, if we had $\iota = \varphi \circ \Delta$ where $\varphi([x_n]_{n \in \N}) = [\varphi_n(x_n)]_{n \in \N}$ for some UCP maps $\varphi_n: N \to N$, that would mean that $\iota(x) = [\varphi_n(x)]_{n \in \N}$ for $x \in N$ contradicting the theorem.  In particular, $\iota$ and $\Delta_N$ are also not unitarily conjugate.

Of course, we cannot always take $N = M$ in Theorem \ref{thm: main 2}, because in the case that $M = R$,  every elementary embedding $R \to R^{\cU}$ does lift.  It would be an interesting question to determine whether $N$ can be taken equal to $M$ for general non-amenable $M$.

Theorem \ref{thm: main 2} likewise shows that under CH there is an automorphism $\alpha: N^{\cU} \to N^{\cU}$ that does not lift to a sequence of UCP maps, in the sense that we cannot write $\alpha([x_n]_{n \in \N}) = [\varphi_n(x_n)]_{n \in \N}$.  Our next result shows that, in the sense of cardinality, ``most'' automorphisms of an ultraproduct of tracial von Neumann algebras do not admit UCP liftings.

\begin{introthm} \label{thm: main theorem}
Fix a free ultrafilter $\cU$ on $\N$. Let $M_n$ be a sequence of separable tracial von Neumann algebras such that the ultraproduct $\cM = \prod_{n \to \cU} M_n$ is not completely atomic.  Then the set of automorphisms of $\cM$ that admit UCP liftings (or even liftings to uniformly bounded sequences of positive maps) has cardinality $\fc$.  In particular, assuming continuum hypothesis, there are $2^\fc$ many automorphisms that do not admit UCP liftings.
\end{introthm}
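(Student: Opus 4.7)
My plan splits into three parts: (i) an upper bound of $\fc$ on the cardinality of liftable automorphisms, (ii) a matching lower bound, and (iii) the total automorphism count $2^{\fc}$ under CH; the ``in particular'' conclusion then follows from $\fc < 2^{\fc}$.

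For (i), I count sequences $(\varphi_n)_{n\in\N}$ of positive maps $\varphi_n\colon M_n \to M_n$ with $\sup_n \norm{\varphi_n}<\infty$ for which $[x_n]_n \mapsto [\varphi_n(x_n)]_n$ is well-defined on $\cM$. Well-definedness forces $\lim_{\cU} \norm{\varphi_n(y_n)}_2 = 0$ whenever $(y_n)$ is operator-norm-bounded with $\lim_{\cU} \norm{y_n}_2 = 0$, and a standard diagonal argument shows this in turn forces the family $(\varphi_n)$, restricted to some $A \in \cU$, to be uniformly $\norm{\cdot}_2$-equicontinuous on each operator-norm ball with modulus independent of $n$. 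Since each $M_n$ is $\norm{\cdot}_2$-separable of cardinality $\fc$, each such $\varphi_n$ is determined by its values on a countable $\norm{\cdot}_2$-dense subset of its unit ball, giving at most $\fc^{\aleph_0}=\fc$ options per $n$, hence at most $\fc^{\aleph_0}=\fc$ sequences, and at most $\fc$ liftable automorphisms.

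For (ii), using that $\cM$ is not completely atomic, I construct $\fc$ pairwise distinct liftable automorphisms by case analysis. If $\cM$ is non-commutative, I pick a non-central self-adjoint $a = [a_n]_n \in \cM$; the unitaries $u_t = [e^{ita_n}]_n$ give UCP-liftable inner automorphisms $\mathrm{Ad}(u_t)$, and real-analyticity of $t \mapsto [u_t, z]$ for some fixed $z$ with $[a,z]\ne 0$ confines the set of $t$ with $u_t \in Z(\cM)$ to a discrete subset of $\R$, yielding $\fc$ distinct automorphisms. If $\cM$ is commutative, then it is a diffuse tracial commutative algebra, and I construct $\fc$ liftable outer automorphisms from appropriate measure-class-preserving transformations of the spectra of the $M_n$; in a representative sub-case $M_n = \C^{k_n}$ with $k_n\to\infty$, cyclic shifts $\sigma_{n,t}(i) = i + \lfloor tk_n\rfloor \bmod k_n$ for $t \in [0,1]$ induce automorphisms $\alpha_t$ of $\cM$ that are UCP-liftable and pairwise distinct via the test element with $n$-th coordinate $i\mapsto i/k_n$.

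For (iii), the ultraproduct $\cM$ is $\aleph_1$-saturated, being built from a sequence of separable structures by a countably incomplete ultrafilter. Under CH, $\fc = \aleph_1$, so $\cM$ is saturated of cardinality $\fc$. By standard continuous model theory, a saturated structure of cardinality $\kappa$ whose theory admits infinitely many $1$-types has $2^{\kappa}$ automorphisms; here the non-completely-atomic hypothesis furnishes a continuum of $1$-types via a diffuse part of $\cM$, giving $|\Aut(\cM)| = 2^{\fc}$. Subtracting the $\fc$ liftable ones leaves $2^{\fc}$ non-liftable automorphisms. The main obstacle I anticipate is the lower bound (ii) in the commutative case, which requires building liftable outer automorphisms explicitly adapted to the atomic-versus-diffuse structure of the component $M_n$ together with a careful distinctness argument on the ultraproduct; the upper-bound count is routine once the $\norm{\cdot}_2$-equicontinuity is extracted, and the total-count step reduces to a standard application of saturation.
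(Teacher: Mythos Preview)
Your approach to (i) contains a genuine gap: well-definedness of the induced map on $\cM$ does \emph{not} force the lift $(\varphi_n)$ to be $\norm{\cdot}_2$-equicontinuous on any $A \in \cU$, so no diagonal argument can rescue the count.  For a concrete counterexample, take $M_n = L^\infty[0,1]$ for all $n$, fix a singular state $\sigma$ on $L^\infty[0,1]$, and set $\varphi_n(x) = (1 - \tfrac{1}{n})x + \tfrac{1}{n}\sigma(x)\cdot 1$.  Each $\varphi_n$ is unital positive, and $\norm{\varphi_n(x_n) - x_n}_2 \leq \tfrac{2}{n}\norm{x_n}_\infty \to 0$, so the induced map is the identity automorphism of $\cM$.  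Yet for \emph{every} $n$ the map $\varphi_n$ fails to be $\norm{\cdot}_2$-continuous at $0$ on the unit ball: choosing projections $p_k$ with $\tau(p_k)\to 0$ and $\sigma(p_k)=1$ gives $\norm{p_k}_2 \to 0$ while $\norm{\varphi_n(p_k)}_2 \to \tfrac{1}{n} > 0$.  Hence no set $A\in\cU$ yields equicontinuity, and your step ``each $\varphi_n$ is determined by its values on a countable $\norm{\cdot}_2$-dense subset'' fails (the singular state $\sigma$ is not so determined).

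The paper closes this gap by exploiting that $\alpha$ is \emph{trace-preserving}, not merely that the lift is well-defined.  Decomposing $\varphi_n = \varphi_n^{\mathrm{nor}} + \varphi_n^{\mathrm{sing}}$ into normal and singular positive parts, one has $\norm{\tau_{M_n}\circ\varphi_n - \tau_{M_n}} \to 0$ along $\cU$; since the norms of the normal and singular parts of a functional add, this forces $\norm{\tau_{M_n}\circ\varphi_n^{\mathrm{sing}}}\to 0$.  An Ozawa--Schwarz inequality for positive (not necessarily completely positive) maps then gives $\norm{\varphi_n^{\mathrm{sing}}(x_n)}_2^2 \leq \norm{\varphi_n}\cdot\norm{\tau_{M_n}\circ\varphi_n^{\mathrm{sing}}}\cdot\norm{x_n}_\infty^2 \to 0$, so $(\varphi_n^{\mathrm{nor}})_n$ is again a lift of $\alpha$.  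Normal maps \emph{are} determined by their values on a countable $\norm{\cdot}_2$-dense set, and now the $\fc$ bound follows.  Your parts (ii) and (iii) are sound; in fact the paper only argues the upper bound in (i) and invokes saturation for (iii), so your explicit lower-bound construction in (ii) is a welcome addition rather than a point of divergence.
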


Here the computation of the cardinality of $\Aut(\mathcal{M})$ under continuum hypothesis is a well-known result in model theory (see \cite[Theorems 16.4.1 and 16.6.3]{Farah2019}).  A similar cardinality argument was used in \cite[Corollary 16.7.2]{Farah2019} to show that automorphisms of $\mathrm{C}^*$-ultraproducts (or metric structures more generally) do not necessarily lift to sequences of automorphisms.  The novel point in our result is how to upper bound the cardinality of the set of maps that do have a lifting.  The positive maps in the lifting need not be normal, and hence may be discontinuous with respect to the $2$-norm metric.  But we show that if a positive lifting exists, then the maps $\varphi_n$ in the lifting can be modified to be normal (see Lemma \ref{lem: normal-pos}), and hence determined by their values on a countable set.

\subsection*{Acknowledgements}

We thank Srivatsav Kunnawalkam Elayavalli for discussions that motivated this work.

Gao was supported by National Science Foundation (US), grant DMS-2451697. Jekel was supported by a Marie Sk{\l}odowska-Curie Action from the European Union (FREEINFOGEOM, grant 101209517).  Views and opinions expressed are those of the author(s) only and do not necessarily reflect those of the European Union or the Research Executive Agency. Neither the European Union nor the granting authority can be held responsible for them.

\section{Preliminaries}

We assume familiarity with $\mathrm{C}^*$-algebras, tracial von Neumann algebras, $*$-homomorphisms, and positive, completely positive, and normal maps (see e.g.\ \cite{Dix81,BrownOzawa2008,Blackadar2006}).  In particular, we recall that if $M \subseteq N$ is an inclusion of tracial von Neumann algebras, then there is always a unique trace-preserving conditional expectation $E_M: N \to M$.

We use several notions from the model theory of metric structures (for background, see \cite{BYBHU08,Hart2023}), although the results needed here can mostly be taken as black boxes.  We recall briefly that a language $\mathcal{L}$ for metric structures is given by specifying a collection of sorts, a collection of operations or functional symbols with specified domain and range sorts, and a collection of real-valued predicate symbols with specified domain sorts (the function and predicate symbols also come with specified moduli of continuity).  For instance, in the language of tracial von Neumann algebras (see \cite{FHS2014a,GH2023}), the function symbols signify the $*$-algebra operations and the predicate symbols signify the real and imaginary parts of the trace and the $L^2$-metric.  An $\mathcal{L}$ structure is a collection of complete metric spaces corresponding to the sorts of $\mathcal{L}$, equipped with operations corresponding to the function symbols, and real-valued functions corresponding to the predicate symbols.

Formulas with a certain set of variables are constructed recursively:  Terms are compositions of function symbols (which for von Neumann algebras would result in $*$-polynomials), basic formulas are predicate symbols composed with terms, and formulas in general are obtained from basic formulas by iteratively applying supremum and infimum quantifiers and continuous functions $\R^k \to \R$.  Each formula $\varphi$ with $m$ variables can be interpreted or evaluated in each structure $M$ as a function $\varphi^M: M^m \to \R$.  While a detailed understanding of formulas is not needed for this work, we will use the following notions:
\begin{itemize}
    \item $M$ and $N$ are \emph{elementarily equivalent} if $\varphi^M = \varphi^N$ for all formulas $\varphi$ with no free variables.
    \item An \emph{elementary embedding} $M \to N$ of metric structures (for instance, of tracial von Neumann algebras) is a map $\iota: M \to N$ such that $\varphi^N(\iota(x_1),\dots,\iota(x_m)) = \varphi^M(x_1,\dots,x_m)$ for all formulas $\varphi$ and $x_1$, \dots, $x_m$ in $M$.  In this case, $M$ is said to be an \emph{elementary substructure} of $N$.
    \item Tuples $(x_i)_{i \in I}$ in $M$ and $(y_i)_{i \in I}$ in $N$ have the \emph{same type} if $\varphi^M(x_i: i \in I) = \varphi^N(y_i: i \in I)$ for all formulas $\varphi$ in variables indexed by $I$.
    \item The \emph{density character} of a metric space is the smallest cardinality of a dense subset.
    \item The language $\mathcal{L}$ is said to be \emph{separable} if the number of sorts is countable and for each sorts $S_1$, \dots, $S_m$, the space of formulas with variables $x_j \in S_j$ for $j = 1$, \dots, $m$ is separable with respect to the uniform norm.  The language of tracial von Neumann algebras is separable (see \cite{FHS2014a}).
    \item The \emph{downward L{\"o}wenheim--Skolem theorem} (see \cite[Proposition 7.3]{BYBHU08}) says that if $M$ is a metric structure in a separable language $\mathcal{L}$ and $A \subseteq M$, then $A$ is contained in some elementary substructure $N$ with density character equal to that of $A$.
\end{itemize}

\section{Proof of Theorem \ref{thm: main 2}} \label{sec: explicit construction}

A discrete group $G$ is said to have \emph{Kazhdan's property (T)} if there exist $g_1$, \dots, $g_m$ in $G$ and a constant $K > 0$ such that for every unitary representation $\pi$ of $G$ on a Hilbert space $H$ and $\xi \in H$,
\[
\norm{\xi -  P_{\operatorname{inv}} \xi}_H \leq K \max_{j=1,\dots,m} \norm{\pi(g_j) \xi - \xi}_H,
\]
where $P_{\operatorname{inv}}$ is the projection onto the subspace of $G$-invariant vectors $H_{\operatorname{inv}} = \{\xi \in H: \pi(g) \xi = \xi \forall g \in G\}$.  We call $g_1$, \dots, $g_m$ a \emph{Kazhdan set} and $K$ the associated \emph{Kazhdan constant}.  Property (T) was defined by Kazhdan in \cite{KazhdanTDef} and has many equivalent formulations; see  \cite{BHVpropertyT} for background.  Kazhdan's work also implies that $SL_3(\mathbb{Z})$ has property (T).

\begin{lemma} \label{lem: property T kernel}
    Let $G$ be an countably infinite group with property (T), with a Kazhdan set $\{g_1, \cdots, g_m\}$ and constant $K$.  Let $\pi_1: G \to G_1$ and $\pi_2: G \to G_2$ be two quotient maps. Let $\varphi: L(G_1) \to L(G_2)$ be a unital completely positive map such that $\max_j \norm{\varphi(\pi_1(g_j)) - \pi_2(g_j)}_2 < 1/8K^2$.
    
    Then $\ker(\pi_1) \subset \ker(\pi_2)$.  Moreover, if $\varphi$ is normal, then $\ker(\pi_1)$ is finite index in $\ker(\pi_2)$.

\end{lemma}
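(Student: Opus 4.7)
The plan is to encode $\varphi$ as a Hilbert $L(G_1)$-$L(G_2)$-bimodule and invoke property (T) inside it. Let $\mathcal{K} = L(G_1) \otimes_\varphi L^2(L(G_2))$ be the Connes-type correspondence with inner product $\langle a_1 \otimes \xi_1, a_2 \otimes \xi_2 \rangle = \langle \xi_1, \varphi(a_1^*a_2)\xi_2 \rangle$, equipped with its canonical left $L(G_1)$- and right $L(G_2)$-actions; write $\hat 1 = 1 \otimes 1$. Define a unitary representation $\rho : G \to U(\mathcal{K})$ by $\rho(g) = L_{\pi_1(g)} R_{\pi_2(g)^{-1}}$. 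A direct computation using the definition of the inner product and cyclicity of the trace gives
\[
\|\rho(g)\hat 1 - \hat 1\|^2 = \|\varphi(\pi_1(g)) - \pi_2(g)\|_2^2 + \bigl(1 - \|\varphi(\pi_1(g))\|_2^2 \bigr).
\]
Kadison--Schwarz forces $\|\varphi(\pi_1(g))\|_2 \leq 1$, and the reverse triangle inequality applied to the hypothesis $\|\varphi(\pi_1(g_j)) - \pi_2(g_j)\|_2 < 1/(8K^2)$ gives $\|\varphi(\pi_1(g_j))\|_2 \geq 1 - 1/(8K^2)$. Plugging in, the second summand above is bounded by $2\varepsilon_j - \varepsilon_j^2$ (with $\varepsilon_j$ the hypothesised distance), and one obtains $\|\rho(g_j)\hat 1 - \hat 1\|^2 < 2\varepsilon_j < 1/(4K^2)$ for every Kazhdan generator $g_j$.

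Applying property (T) to $\rho$ with almost-invariant vector $\hat 1$, the orthogonal projection $\hat\eta := P_{\mathrm{inv}}\hat 1$ onto the space of $\rho(G)$-invariants satisfies $\|\hat\eta - \hat 1\| \leq K \max_j \|\rho(g_j)\hat 1 - \hat 1\| < 1/2$. Invariance $\rho(g)\hat\eta = \hat\eta$ for every $g \in G$ is the intertwining identity $\pi_1(g)\hat\eta = \hat\eta\pi_2(g)$ in $\mathcal{K}$; specialising to $g \in \ker\pi_1$ yields $\hat\eta\pi_2(g) = \hat\eta$.

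The first assertion now follows from a quick triangle inequality, using that right-multiplication by a unitary is an isometry on $\mathcal{K}$ and that $\|\hat 1 \cdot b\|_{\mathcal{K}} = \|b\|_{L^2(L(G_2))}$: for $g \in \ker\pi_1$,
\[
\|\pi_2(g) - 1\|_2 = \|\hat 1\pi_2(g) - \hat 1\| \leq \|(\hat 1 - \hat\eta)\pi_2(g)\| + \|\hat\eta\pi_2(g) - \hat\eta\| + \|\hat\eta - \hat 1\| \leq 2\|\hat 1 - \hat\eta\| < 1.
\]
But $\|\pi_2(g) - 1\|_2^2 = 2(1 - \tau_{L(G_2)}(\pi_2(g))) \in \{0,2\}$, since the canonical trace of the group element $\pi_2(g) \in G_2 \subset L(G_2)$ is $1$ or $0$ according to whether $\pi_2(g) = e_{G_2}$. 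The strict bound therefore forces $\pi_2(g) = 1$, giving $\ker\pi_1 \subseteq \ker\pi_2$.

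For the moreover clause, the additional input is that when $\varphi$ is normal the left $L(G_1)$-action on $\mathcal{K}$ is ultraweakly continuous, so the map $a \mapsto a\hat\eta$ is weakly continuous and its kernel is a weakly closed left ideal of $L(G_1)$, therefore of the form $L(G_1) e$ for some projection $e$. Setting $p' = 1 - e$, the relation $\pi_1(g)\hat\eta = \hat\eta\pi_2(g) = \hat\eta$ for $g \in \ker\pi_2$ (using the first assertion) rearranges to $\pi_1(g) p' = p'$, so $p' \in L^2(L(G_1))$ is a nonzero vector fixed by left translation under $\pi_1(\ker\pi_2)$. A countably infinite subgroup of $G_1$ has no nonzero left-translation-invariant vector in $L^2(L(G_1))$, so $\pi_1(\ker\pi_2)$ is finite, giving $[\ker\pi_2 : \ker\pi_1] = |\pi_1(\ker\pi_2)| < \infty$. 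The main delicacy is the passage from UCP to normal: the right action is ultraweakly continuous by construction (which is why the first part needs nothing beyond UCP), but the left ideal argument just sketched genuinely requires normality of $\varphi$.
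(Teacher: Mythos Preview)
Your argument is correct and, up to the property (T) step, essentially identical to the paper's: you build the same Connes correspondence $\mathcal{K}$, the same diagonal representation $g \mapsto L_{\pi_1(g)}R_{\pi_2(g)^{-1}}$, obtain the same bound $\|\rho(g_j)\hat 1 - \hat 1\|^2 \leq 2\|\varphi(\pi_1(g_j)) - \pi_2(g_j)\|_2 < 1/(4K^2)$, and deduce $\ker\pi_1 \subseteq \ker\pi_2$ by the same $\{0,\sqrt 2\}$ dichotomy for $\|\pi_2(g)-1\|_2$. (Your explicit decomposition of $\|\rho(g)\hat 1 - \hat 1\|^2$ into two pieces is a cosmetic variant of the paper's single Cauchy--Schwarz line; and you run the triangle inequality through the invariant vector $\hat\eta$ rather than through the uniform bound $\sup_g\|\sigma(g)\hat 1 - \hat 1\|<1$, but this is the same idea.)

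For the ``moreover'' clause your route genuinely differs from the paper's. The paper argues by contradiction: if $[\ker\pi_2:\ker\pi_1]=\infty$, pick $h_j\in\ker\pi_2$ with $\pi_1(h_j)$ distinct, so $\pi_1(h_j)\to 0$ weak-$*$; normality of $\varphi$ makes the left action normal, forcing $\langle\hat 1,\pi_1(h_j)\hat 1\rangle\to 0$, which contradicts the uniform bound $|1-\langle\hat 1,\pi_1(h_j)\hat 1\rangle|<1$ coming from $\pi_2(h_j)=1$. You instead extract structure from the invariant vector: the annihilator of $\hat\eta$ under the (now normal) left action is a weak-$*$ closed left ideal $L(G_1)e$, and the intertwining relation $\pi_1(g)\hat\eta=\hat\eta$ for $g\in\ker\pi_2$ yields a nonzero projection $p'=1-e\in \ell^2(G_1)$ fixed by $\pi_1(\ker\pi_2)$, which forces that subgroup to be finite. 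Both arguments hinge on the same point---normality of the left action---but yours is more structural (it produces an actual invariant object in $L^2(L(G_1))$), while the paper's is more hands-on and avoids invoking the ideal structure of von Neumann algebras. One minor remark: your parenthetical ``using the first assertion'' is misplaced---the equation $\pi_1(g)\hat\eta=\hat\eta\pi_2(g)=\hat\eta$ for $g\in\ker\pi_2$ needs only invariance of $\hat\eta$; the first assertion is used only at the very end to identify $|\pi_1(\ker\pi_2)|$ with the index.
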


\begin{proof}
	Let $H$ be the Hilbert-space completion of the $\mathrm{C}^*$-correspondence given by $\varphi$.  More concretely, $H$ is formed as the separation-completion of $L(G_1) \otimes_{\alg} L(G_2)$ with respect to the inner product
	\begin{equation*}
	    \ip{x_1 \otimes y_1, x_2 \otimes y_2} = \tau_{L(G_2)}(y_1^* \varphi(x_1^*x_2) y_2).
	\end{equation*}
    (For background, see \cite[\S 4.6, \S F]{BrownOzawa2008}.)  From the general theory of correspondences, there is a $*$-homomorphism $\lambda: L(G_1) \to B(H)$ given by left multiplication on the left tensorand.  Using the fact that the inner product is given by $\tau_{L(G_2)}$ on the outside, there is anti-$*$-homomorphism $\rho: L(G_2) \to B(H)$ given by right-multiplication on the right-tensorand.  Now consider the group representation $\sigma: G \to U(H)$ given by
	\begin{equation*}
	    \sigma(g) = \lambda(\pi_1(g)) \rho(\pi_2(g^{-1})).
	\end{equation*}
	
	Let $\xi = 1 \otimes 1$ in $H$. Then,
    \begin{equation*}
    \begin{split}
        \norm{\sigma(g) \xi - \xi}_H^2 &= \norm{\lambda(\pi_1(g)) \xi - \rho(\pi_2(g)) \xi}_H^2\\
        &= \norm{\pi_1(g) \otimes 1 - 1 \otimes \pi_2(g)}_H^2\\
        &= \tau_{L(G_2)}(\varphi(\pi_1(g)^*\pi_1(g))) - 2 \re \tau_{L(G_2)}(\varphi(\pi_1(g)^*) \pi_2(g)) + \tau_{L(G_2)}(\pi_2(g)^* \varphi(1) \pi_2(g))\\
        &= 2 - 2 \re \tau_{L(G_2)}(\varphi(\pi_1(g)^*) \pi_2(g))\\
        &= 2 \re \tau_{L(G_2)}((\pi_2(g)^* - \varphi(\pi_1(g))) \pi_2(g))\\
        &\leq 2 \norm{\pi_2(g) - \varphi(\pi_1(g))}_2.
    \end{split}
    \end{equation*}
	
	In particular,
	\begin{equation*}
	    \max_j \norm{\sigma(g_j)\xi - \xi}_H \leq \max_j \sqrt{2 \norm{\pi_2(g_j) - \varphi(\pi_1(g_j))}_2} < \frac{1}{2K}.
	\end{equation*}
	
	Therefore, there exists an invariant vector $\eta$ with $\norm{\xi - \eta}_H < 1/2$.  Hence, by the triangle inequality, we see that
	\begin{equation*} 
	    \sup_{g \in G} \norm{\sigma(g) \xi - \xi}_H < 1.
	\end{equation*}
    
	If $g \in \ker(\pi_1)$, then
	\begin{equation*}
    \begin{split}
        |1 - \tau_{L(G_2)}(\pi_2(g))| &\leq \norm{1 - \pi_2(g)}_2\\
        &= \norm{\xi - \rho(\pi_2(g))\xi}_H\\
        &= \norm{\lambda(\pi_1(g)) \xi - \rho(\pi_2(g)) \xi}_H\\
        &= \norm{\sigma(g) \xi - \xi}_H\\
        &< 1.
    \end{split}
	\end{equation*}
    
	Since $\tau_{L(G_2)}(\pi_2(g))$ is either zero or one, we see that it must be one and so $\pi_2(g) = 1$, or $g \in \ker(\pi_2)$.  Thus, $\ker(\pi_1) \subseteq \ker(\pi_2)$.

    Now suppose that $\varphi$ is normal, or it is continuous with respect to the weak-$*$ topology on the unit ball of $L(G_1)$.  Suppose that for contradiction that $\ker(\pi_1)$ is infinite index in $\ker(\pi_2)$.  Then $G_2$ is a quotient of $G_1$ by a subgroup of infinite index.  Thus, there exists an family $(h_j)_{j \in \N}$ in $G$ such that $\pi_2(h_j) = 1$ and the elements $\pi_1(h_j)$ are distinct.  Hence, $\pi_1(h_j)$ are mutually orthogonal unitary elements in $L(G_1)$ and so $\pi_1(h_j) \to 0$ in the weak-$*$ topology in $L(G_1)$.  Since $\varphi$ is normal, the representation $\lambda: L(G_1) \to B(H)$ is also normal.  Therefore, $\lambda(\pi_1(h_j)) \to 0$ in the weak operator topology, so in particular,
    \[
    \lim_{j \to \infty} \ip{\xi, \lambda(\pi_1(h_j)) \xi} = 0.
    \]
    On the other hand, since $\pi_2(h_j) = 1$, we have for all $j \in \N$ that
    \begin{equation*}
    \begin{split}
        |1 - \ip{\xi, \lambda(\pi_1(h_j)) \xi}| &\leq \norm{\xi - \lambda(\pi_1(h_j))\xi}_H\\
        &= \norm{\rho(\pi_2(h_j)) \xi - \lambda(\pi_1(h_j)) \xi}_H\\
        &= \norm{\xi - \sigma(h_j)\xi}_H\\
        &\leq \sup_{g \in G} \norm{\xi - \sigma(g) \xi}_H \\
        &< 1.
    \end{split}
    \end{equation*}
    This implies that $\re \ip{\xi, \lambda(\pi_1(h_j)) \xi}$ has a strictly positive lower bound, contradicting the fact $\ip{\xi, \lambda(\pi_1(h_j)) \xi}  \to 0$.
\end{proof}

We also recall the following well-known fact.

\begin{lemma} \label{lem: normal perturbation}
Let $M$ and $N$ be von Neumann algebras with $N$ finite-dimensional. Given $\varepsilon > 0$, a finite $F \subset M$ and a UCP map $\varphi: M \to N$, there exists a normal UCP map $\varphi': M \to N$ such that $\max_{x \in F} \norm{\varphi(x) - \varphi'(x)} < \varepsilon$.
\end{lemma}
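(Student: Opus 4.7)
The plan is to reduce the problem to the case $N = M_n(\C)$ and then exploit the Choi--Jamiolkowski correspondence to convert it into a density question about states on a von Neumann algebra. For the reduction, since $N$ is finite-dimensional, write $N = \bigoplus_k M_{n_k}(\C)$; compressing $\varphi$ by the minimal central projections of $N$ produces UCP maps $\varphi_k: M \to M_{n_k}$, and a normal UCP approximation of each $\varphi_k$ reassembles via direct sum into a normal UCP approximation of $\varphi$.

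In the remaining case $N = M_n(\C)$, I would invoke the Choi--Jamiolkowski correspondence, which identifies UCP maps $\varphi: M \to M_n$ bijectively with states $\omega_\varphi$ on the von Neumann algebra $M_n(M) = M \otimes M_n$ satisfying $\omega_\varphi|_{1 \otimes M_n} = \tau_n$ (the normalized trace), via $\omega_\varphi(x \otimes e_{ij}) = n^{-1}(\varphi(x))_{ji}$; moreover $\varphi$ is normal if and only if $\omega_\varphi$ is. The problem thus reduces to the following density statement: on any von Neumann algebra $P$, given a state $\omega$, a finite set $F' \subset P$, and $\delta > 0$, there exists a normal state $\omega'$ with $|\omega'(y) - \omega(y)| < \delta$ for $y \in F'$. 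This is provable via Hahn--Banach separation: if normal states were not dense in the joint numerical range of $F'$, separation would produce some $y \in P$ with $\sup_{\psi \text{ normal}} \re \psi(y) < \sup_{\psi \text{ state}} \re \psi(y)$, contradicting the fact that both suprema equal the top of the spectrum of $\re y$ (attainable by normal vector states supported on appropriate spectral projections).

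Applying this with $P = M_n(M)$ and $F' = F \otimes \{e_{ij}\}_{i,j} \cup \{1 \otimes e_{ij}\}_{i,j}$ produces a normal CP map $\varphi'$ with $\norm{\varphi'(x) - \varphi(x)} < \delta$ for $x \in F$ and $\norm{\varphi'(1) - 1_{M_n}} < \delta$. To make it genuinely unital, I would take $\delta$ small enough that $a := \varphi'(1)$ is invertible in $M_n$, and replace $\varphi'$ by $\tilde\varphi(x) := a^{-1/2} \varphi'(x) a^{-1/2}$; this preserves complete positivity and normality, is now unital, and perturbs values on $F$ by $O(\delta)$. Choosing $\delta$ small enough yields $\norm{\tilde\varphi(x) - \varphi(x)} < \varepsilon$ on $F$. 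The main conceptual input is the density of normal states on a general von Neumann algebra; everything else is bookkeeping.
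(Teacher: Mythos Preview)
Your proof is correct and follows essentially the same route as the paper's: reduce to $N = M_n(\C)$ via the direct-sum decomposition, invoke the Choi correspondence between CP maps $M \to M_n$ and positive functionals on $M_n(M)$, approximate by a normal functional, and then repair unitality. The only cosmetic differences are that the paper obtains the normal approximation via vector states in a GNS representation (rather than your Hahn--Banach/spectral argument for weak-$*$ density of normal states) and restores unitality by a convex combination with a state (rather than by conjugating with $\varphi'(1)^{-1/2}$).
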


\begin{proof}
We first prove the claim with $\varphi$ and $\varphi'$ completely positive maps that are not necessarily unital.  Recall that $N$ is a direct sum of matrix algebras $\mathbb{M}_{n_i}$.  Letting $\varphi_i$ be the projection of $\varphi$ onto $\mathbb{M}_{n_i}$, it suffices to show that for each $i$, there exists a normal CP map $\varphi_i'$ with $\max_{x \in F} \norm{\varphi_i(x) - \varphi_i'(x)} < \varepsilon$.

By \cite[Proposition 1.5.14]{BrownOzawa2008}, completely positive maps $M \to \mathbb{M}_{n_i}$ are in bijection with positive linear functionals on $\mathbb{M}_{n_i} \otimes M$, where for each $\varphi: M \to \mathbb{M}_{n_i}$, the corresponding linear functional is given by
\[
\widehat{\varphi}(E_{j,k} \otimes x) = [\varphi(x)]_{j,k},
\]
where $E_{j,k}$ is the standard matrix unit.  Let $A = \mathrm{C}^*(x: x \in F) \subseteq M$, and let $\pi: \mathbb{M}_{n_i} \otimes A \to B(H)$ be the GNS representation on the standard form of $\mathbb{M}_{n_i} \otimes M$.  Since this is faithful on $A$, positive linear functionals on $\mathbb{M}_{n_i} \otimes A$ can be weak-$*$ approximated by those given by vectors, which of course extend to normal functionals on $\mathbb{M}_{n_i} \otimes M$.  Therefore, $\widehat{\varphi}$ can be approximated arbitrarily well on $F$ by normal positive functionals $\widehat{\psi}$ on $M$, which in turn correspond to completely positive maps $M \to \mathbb{M}_{n_i}$.  This implies the claim on approximating CP maps.

Finally, suppose that $\varphi: M \to \mathbb{M}_{n_i}$ is UCP.  Assume without loss of generality that $F$ contains $1$.  Fix $\delta > 0$.  Let $\psi$ be a normal CP map such that $\max_{x \in F} \norm{\varphi(x) - \psi(x)} < \delta$.  Let $\sigma$ be a state on $M$.  Note that $(1 + \delta)^{-1} \psi(1) \leq 1$.  Let
\[
\varphi'(x) = (1 + \delta)^{-1} \psi(x) + \sigma(x)[1 - (1 + \delta)^{-1} \psi(1)].
\]
Then $\varphi'$ is a UCP map, and
\[
\norm{\varphi' - \psi} \leq 2[1 - (1 + \delta)^{-1}].
\]
By choosing $\delta$ small enough, we can arrange that $\max_{x \in F} \norm{\varphi'(x) - \varphi(x)} < \varepsilon$.
\end{proof}

\begin{proof}[Proof of Thereom \ref{thm: main 2}]
    Let $G = SL_3(\Z)$, $G_n = SL_3(\Z / n\Z)$, and $\pi_n: G \to G_n$ be the natural quotient map.  Since $L(G_n)$ is finite-dimensional, there is a trace-preserving embedding $\iota_n: L(G_n) \to M$.  Let $\iota: G \to M^\cU$ be given by
    \begin{equation*}
        \iota(g) = [\iota_n(\pi_n(g))]_n,
    \end{equation*}
    which yields a group homomorphism of $G$ into the unitary group such that
    \begin{equation*}
        \tau(\iota(g)) = \lim_{n \to \cU} \tau_{L(G_n)}(\pi_n(g)) = \tau_{L(G)}(g).
    \end{equation*}
    Therefore, $\iota$ extends to a trace-preserving embedding of $L(G)$ into $M^\cU$.
	
	Using the downward L\"owenheim--Skolem theorem, let $N$ be a separable elementary submodel of $M^\cU$ that contains $\Delta_M(M)$ as well as $\iota(L(G))$.  Of course, $N$ is elementarily equivalent to $M^\cU$ and $M$.  Consider the elementary embedding $\varphi: N \to N^{\cU}$ given as the composition of $N \subset M^\cU$ and then $M^\cU \subset N^\cU$.
    
    Suppose for contradiction that $\varphi$ admits a UCP lifting $(\varphi_n)_n$. Then for each $n$, we have a UCP map $\psi_n: L(G) \to L(G_n)$ given by
    \begin{equation*}
        \psi_n(x) = E_{\iota_n(L(G_n))}(E_M(\phi_n(\iota(x)))).
    \end{equation*}
    Note that, for any $g \in G$, we have
    \begin{equation*}
        \phi(\iota(g)) = [\phi_n(\iota(g))]_n
    \end{equation*}
    as well as
    \begin{equation*}
        \phi(\iota(g)) = [\iota_n(\pi_n(x))]_n \subset \prod_{n \to \cU} \iota(L(G_n)) \subset M^\cU \subset N^\cU.
    \end{equation*}
    Hence,
    \begin{equation*}
        \phi(\iota(g)) = [E_{\iota_n(L(G_n))}(E_M(\phi_n(\iota(g))))]_n = [\psi_n(g)]_n.
    \end{equation*}

    Thus, $\|\psi_n(g) - \pi_n(g)\|_2 \to 0$ as $n \to \cU$.  Let $g_1$, \dots, $g_m$ be a Kazhdan set for $G$ with constant $K$.  Since $L(G_n)$ is finite-dimensional, by Lemma \ref{lem: normal perturbation}, there exists a normal completely positive map $\psi_n': L(G) \to L(G_n)$ such that $\max_j \|\psi_n(g_j) - \psi_n'(g_j)\|_2 \to 0$ for each $g$.  Hence, for $\cU$-many $n$, we have $\max_j \norm{\psi_n'(g_j) - \pi_n(g_j)}_2 < 1/8K^2$.  By Lemma \ref{lem: property T kernel}, this implies that $\ker(\id) = \{e\}$ is a finite index subgroup in  $\ker(\pi_n)$.  But $\ker(\pi_n)$ being finite and $G_n$ being finite would imply that $G$ is finite, a contradiction.
\end{proof}

\section{Proof of Theorem \ref{thm: main theorem}}

The Schwarz inequality $\varphi(x)^* \varphi(x) \leq \varphi(x^*x)$ is an essential tool for understanding completely positive maps.  Since we want to prove our result more generally for positive maps, we will use the following analog of the Schwarz inequality given by Ozawa, which works for positive maps in general but replaces $x^*x$ with the Jordan product $x^* \circ x$.  While the cited source only proves the result for unital positive maps, the more general version can be proved using the exact same argument.

\begin{lemma}[Corollary 2.5 of \cite{Ozawa04}]\label{lem: schwarz-ineq}
    Let the Jordan product of $a, b \in M$ be denoted by $a \circ b$, i.e., $a \circ b = \frac{ab + ba}{2}$. Then for a contractive positive map $\phi$, $\phi(x)^\ast \circ \phi(x) \leq \phi(x^\ast \circ x)$. Thus, if $\phi$ is a positive map without assuming it is contractive, we still have $\phi(x)^\ast \circ \phi(x) \leq \|\phi\|\phi(x^\ast \circ x)$.
\end{lemma}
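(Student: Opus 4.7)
The plan is to reduce the statement to the unital positive case established by Ozawa in \cite[Corollary 2.5]{Ozawa04}, via a one-point unitization of the positive map, and then to obtain the general (non-contractive) version by rescaling.

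For the contractive case, I would take a positive contraction $\phi: M \to N$ and extend it to a unital positive map on the unitization $M \oplus \C$ (with unit $(1,1)$) by
\[
\psi(x, \lambda) = \phi(x) + \lambda (1 - \phi(1)).
\]
The key observation is that contractivity combined with positivity forces $\phi(1) \leq 1$ in $N$, so $1 - \phi(1) \geq 0$ and hence $\psi$ is positive on the positive cone $\{(x, \lambda): x \geq 0,\ \lambda \geq 0\}$ of $M \oplus \C$; unitality is immediate by direct computation. Since the Jordan product on $M \oplus \C$ is taken componentwise, one has $(x, 0)^* \circ (x, 0) = (x^* \circ x, 0)$, so applying Ozawa's unital Schwarz inequality to the element $(x, 0)$ and unpacking the first coordinate yields exactly
\[
\phi(x)^* \circ \phi(x) = \psi(x, 0)^* \circ \psi(x, 0) \leq \psi\bigl((x, 0)^* \circ (x, 0)\bigr) = \phi(x^* \circ x).
\]

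For a general positive map $\phi$, if $\phi = 0$ the statement is trivial; otherwise $\phi/\|\phi\|$ is a positive contraction to which the previous step applies. Multiplying both sides of the resulting inequality by $\|\phi\|^2$ gives $\phi(x)^* \circ \phi(x) \leq \|\phi\| \phi(x^* \circ x)$, as required. The only point that needs checking is the positivity of the unitization $\psi$, but this is immediate from $1 - \phi(1) \geq 0$ together with the fact that the positive cone of $M \oplus \C$ factors as a product of positive cones, so I do not anticipate any substantive obstacle.
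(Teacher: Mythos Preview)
Your proof is correct. The unitization trick works cleanly: since $M$ is already unital, $M \oplus \C$ is just the $\mathrm{C}^*$-direct sum, so its positive cone is indeed the product of the positive cones, $\psi$ is unital positive, and evaluating Ozawa's unital Jordan--Schwarz inequality at $(x,0)$ recovers the contractive case; the rescaling for the general case is immediate.

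This is a genuinely different route from what the paper indicates. The paper does not actually write out a proof but remarks that ``the more general version can be proved using the exact same argument'' as Ozawa's, i.e.\ one is meant to rerun Ozawa's proof verbatim and observe that unitality is never used beyond $\phi(1) \leq 1$. Your approach instead treats Ozawa's unital result as a black box and reduces to it via the direct-sum extension $\psi$. The advantage of your method is that it is self-contained given the cited statement and does not require the reader to consult Ozawa's argument; the advantage of the paper's suggestion is that it avoids the (admittedly trivial) auxiliary construction. Either way the content is the same, and your derivation of the $\|\phi\|$ factor by rescaling is exactly how one should pass from the contractive to the general case.
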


We next show that if an automorphism $\alpha$ lifts to a uniformly bounded sequence of positive maps $\varphi_n$, then the lift can be modified to arrange that $\varphi_n$ is also normal.

\begin{lemma}\label{lem: normal-pos}
    Fix a free ultrafilter $\cU$ on $\N$. Let $M_i$ be a sequence of separable tracial von Neumann algebras and $\cM = \prod_{i \to \cU} M_i$. Suppose $\alpha: \cM \to \cM$ is a trace-preserving automorphism that lifts to a uniformly bounded sequence of positive maps $(\phi^i)_i$. Then it can be lifted to a uniformly bounded sequence of normal positive maps as well.
\end{lemma}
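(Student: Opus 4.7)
The approach is to split each $\phi^i$ into normal and singular parts via a Takesaki-type decomposition, and then to show that the singular part gives the zero map on $\cM$ using that $\alpha$ is trace-preserving.

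First I would produce the decomposition $\phi^i = \phi^i_{\mathrm{n}} + \phi^i_{\mathrm{s}}$ as follows. For each $\omega \in (M_i)_*$, Takesaki's theorem splits $\omega \circ \phi^i \in (M_i)^*$ into a normal part $(\omega \circ \phi^i)_{\mathrm{n}} \in (M_i)_*$ and a singular part $(\omega \circ \phi^i)_{\mathrm{s}}$, with both summands positive when $\omega$ is positive. Since the projection of $(M_i)^*$ onto the normal summand is a bounded linear projection, the assignment $\omega \mapsto (\omega \circ \phi^i)_{\mathrm{n}}$ is a bounded positive-cone-preserving linear map $(M_i)_* \to (M_i)_*$. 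Its dual is a bounded positive map $\phi^i_{\mathrm{n}} \colon M_i \to M_i$ which is normal by construction (it has a preadjoint) and satisfies $\phi^i_{\mathrm{n}} \leq \phi^i$ on the positive cone; in particular $\|\phi^i_{\mathrm{n}}\| \leq \|\phi^i\|$. Setting $\phi^i_{\mathrm{s}} := \phi^i - \phi^i_{\mathrm{n}}$ yields a positive map, dominated by $\phi^i$, with the singularity property that $\omega \circ \phi^i_{\mathrm{s}}$ is a singular positive functional for every $\omega \in (M_i)_*^+$.

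The key step---and the main obstacle---is showing that $(\phi^i_{\mathrm{n}})_i$ still lifts $\alpha$, equivalently that $\|\phi^i_{\mathrm{s}}(x_i)\|_2 \to 0$ along $\cU$ for every bounded sequence $(x_i)$. Applying Ozawa's Jordan-Schwarz inequality (Lemma~\ref{lem: schwarz-ineq}) to $\phi^i_{\mathrm{s}}$ gives
\[
\|\phi^i_{\mathrm{s}}(x_i)\|_2^2 \leq \|\phi^i_{\mathrm{s}}\| \, \tau_i\bigl(\phi^i_{\mathrm{s}}(x_i^* \circ x_i)\bigr) = \|\phi^i_{\mathrm{s}}\| \, \sigma_i(x_i^* \circ x_i) \leq \|\phi^i\| \, \|\sigma_i\| \, \|x_i\|^2,
\]
where $\sigma_i := \tau_i \circ \phi^i_{\mathrm{s}}$ is, by construction, a singular positive functional on $M_i$. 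So the whole argument reduces to proving $\|\sigma_i\| \to 0$ along $\cU$.

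To establish this, I would exploit that $\alpha$ is trace-preserving, which gives $\lim_{i \to \cU} \tau_i(\phi^i(y_i)) = \lim_{i \to \cU} \tau_i(y_i)$ for every bounded sequence $(y_i)$. Since $\sigma_i$ is singular and $\tau_i$ is a normal faithful state, Takesaki's characterization of singularity (mutual singularity of $\sigma_i$ with every normal state) produces projections $q_i \in M_i$ with $\tau_i(1 - q_i) < 1/i$ and $\sigma_i(q_i) < 1/i$. Applying the trace identity to $y_i = 1 - q_i$, whose traces tend to zero, yields $\lim_{i \to \cU} \tau_i(\phi^i(1 - q_i)) = 0$. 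Decomposing $\tau_i \circ \phi^i = \tau_i \circ \phi^i_{\mathrm{n}} + \sigma_i$ into two non-negative positive functionals then forces $\lim_{i \to \cU} \sigma_i(1 - q_i) = 0$; combined with $\sigma_i(q_i) < 1/i \to 0$, we conclude $\|\sigma_i\| = \sigma_i(1) = \sigma_i(q_i) + \sigma_i(1 - q_i) \to 0$ along $\cU$, completing the argument.
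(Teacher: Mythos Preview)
Your proof is correct and follows the same strategy as the paper: Takesaki-decompose each $\phi^i$ into normal and singular parts, reduce via Ozawa's Jordan--Schwarz inequality to showing $\|\tau_i \circ \phi^i_{\mathrm{s}}\| \to 0$ along $\cU$, and deduce this from trace-preservation of $\alpha$. The only minor difference is in that last step---where you invoke the projection characterization of singularity and test against $1-q_i$, the paper instead shows $\|\tau_i \circ \phi^i - \tau_i\| \to 0$ in norm (by a witness/contradiction argument) and then uses norm additivity of the normal/singular decomposition of $\tau_i \circ \phi^i - \tau_i$ to bound $\|\tau_i \circ \phi^i_{\mathrm{s}}\|$ directly, which is slightly shorter.
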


\begin{proof}
    Recall that every map between von Neumann algebras decomposes into the sum of its normal and singular parts and that normal and singular parts of positive maps are positive. Thus, we may write $\phi^i = \phi^i_n + \phi^i_s$ where $\phi^i_n$ is normal positive and $\phi^i_s$ is singular positive, i.e., $\varphi \circ \phi^i_s$ is singular for every normal positive linear functional $\varphi$ on $M_i$. Recall also that the norm of a linear functional is the sum of the norm of its normal part and the norm of its singular part. (See section III.2 of \cite{Tak79}.)

    Now, since $\alpha$ is trace-preserving, we have $\|\tau_{M_i} \circ \phi^i - \tau_{M_i}\| \to 0$ as $i \to \cU$. Indeed, otherwise we may pick $x_i \in M_i$ with $\|x_i\|_\infty \leq 1$ and
    \begin{equation*}
        |\tau_\cU(\alpha([x_i]_i)) - \tau_\cU([x_i]_i)| = \lim_{i \to \cU} |\tau_{M_i} \circ \phi^i(x_i) - \tau_{M_i}(x_i)| \neq 0,
    \end{equation*}
    contradicting the assumption that $\alpha$ is trace-preserving. Thus,
    \begin{equation*}
    \begin{split}
        \|\tau_{M_i} \circ \phi^i_s\| &\leq \|\tau_{M_i} \circ \phi^i_n - \tau_{M_i}\| + \|\tau_{M_i} \circ \phi^i_s\|\\
        &= \|\tau_{M_i} \circ \phi^i_n - \tau_{M_i} + \tau_{M_i} \circ \phi^i_s\|\\
        &= \|\tau_{M_i} \circ \phi^i - \tau_{M_i}\|\\
        &\to 0.
    \end{split}
    \end{equation*}
    as $i \to \cU$. Hence, for any sequence $(x_i)_i \in \prod_{i \in \N} M_i$,
    \begin{equation*}
    \begin{split}
        \tau_{M_i}(\phi^i_s(x_i)^\ast\phi^i_x(x_i)) &= \tau_{M_i}(\phi^i_s(x_i)^\ast \circ \phi^i_x(x_i))\\
        &\leq \|\phi^i_s\|\tau_{M_i}(\phi_s^i(x_i^\ast \circ x_i))\\
        &\leq \|\tau_{M_i} \circ \phi^i_s\|\left(\sup_i \|\phi^i\|\right)\left(\sup_i \|x_i\|^2\right)\\
        &\to 0.
    \end{split}
    \end{equation*}
    as $i \to \cU$. Therefore, $(\phi^i_n)_i = (\phi^i - \phi^i_s)_i$ is a lifting of $\alpha$ as well.
\end{proof}

\begin{proof}[Proof of Theorem \ref{thm: main theorem}]
Since $M_n$ are separable, there are only $\fc$ many sequences of normal linear maps $M_n \to M_n$.  Hence, the cardinality of the set of automorphisms that admit bounded positive lifts is at most $\fc$ by the previous Lemma.  On the other hand, recall that by \cite[Theorem 16.4.1]{Farah2019} the ultraproduct $\mathcal{M} = \prod_{n \to \cU} M_n$ is countably saturated (or $\aleph_1$-saturated).  The density character of the ultraproduct is $\fc$ by \cite[Proposition 5.3]{GJ25}.  We assumed the continuum hypothesis $\aleph_1 = \fc$, and the language of tracial von Neumann algebras is separable.    Hence, \cite[Theorem 16.6.3]{Farah2019} implies that $\Aut(\mathcal{M})$ has cardinality $2^{\fc}$.  Since only $\fc$ many automorphisms admit bounded positive lifts, there must be $2^{\fc}$ many that do not.
\end{proof}

\bibliographystyle{amsalpha}
\bibliography{references}

@article{AGKE22,
    TITLE = {Factorial relative commutants and the generalized {J}ung property for {$\mathrm{II}_1$} factors},
    AUTHOR = {Atkinson, S. and Goldbring, I. and Kunnawalkam Elayavalli, S.},
    JOURNAL = {Advances in Mathematics},
    YEAR = {2022},
    VOLUME = {396},
    NUMBER = {108107},
    PAGES = {1-53},
    DOI = {10.1016/j.aim.2021.108107}
}

@article{AKE2021,
    title = {On Ultraproduct Embeddings and Amenability for Tracial von {N}eumann Algebras},
    author = {Scott Atkinson and Srivatsav {Kunnawalkam Elayavalli}},
    journal = {International Mathematics Research Notices},
    volume = {2021},
    number = {4},
    year = {2021},
    pages = {2882-2918},
    doi = {10.1093/imrn/rnaa257}
}

@book {BHVpropertyT,
    AUTHOR = {Bekka, Bachir and de la Harpe, Pierre and Valette, Alain},
     TITLE = {Kazhdan's property ({T})},
    SERIES = {New Mathematical Monographs},
    VOLUME = {11},
 PUBLISHER = {Cambridge University Press, Cambridge},
      YEAR = {2008},
     PAGES = {xiv+472},
      ISBN = {978-0-521-88720-5},
   MRCLASS = {22-02 (22E40 28D15 37A15 43A07 43A35)},
  MRNUMBER = {2415834},
MRREVIEWER = {Markus Neuhauser},
       DOI = {10.1017/CBO9780511542749},
       URL = {https://doi.org/10.1017/CBO9780511542749},
}

@incollection{BYBHU08,
    AUTHOR = {{Ben Yaacov}, I. and Berenstein, A. and Henson, C.W. and Usvyatsov, A.},
    TITLE = {Model theory for metric structures},
    BOOKTITLE = {Model Theory with Applications to Algebra and Analysis, Vol. II},
    EDITOR = {Chatzidakis, Z. and Macpherson, D. and Pillay, A. and Wilkie, A.},
    SERIES = {London Mathematical Society Lecture Notes Series},
    PUBLISHER = {London Mathematical Society},
    NUMBER = {350},
    YEAR = {2008},
    PAGES = {315-427}
}

@book{Blackadar2006,
	author = {Bruce Blackadar},
	title = {Operator Algebras:  Theory of ${C}^*$-algebras and von {N}eumann algebras},
	series = {Encyclopaedia of Mathematical Sciences},
	volume = {122},
	year = {2006},
	publisher = {Springer-Verlag},
	address = {Berlin, Heidelberg},
	doi = {10.1007/3-540-28517-2},
	url = {https://wolfweb.unr.edu/homepage/bruceb/Cycr.pdf}
}

@article{Brown2011,
    title = {Topological dynamical systems associated to {$\mathrm{II}_1$}-factors},
    journal = {Advances in Mathematics},
    volume = {227},
    number = {4},
    pages = {1665-1699},
    year = {2011},
    issn = {0001-8708},
    doi = {https://doi.org/10.1016/j.aim.2011.04.003},
    url = {https://www.sciencedirect.com/science/article/pii/S0001870811001186},
    author = {Nathanial P. Brown}
}

@book{BrownOzawa2008,
	author = {Nathaniel P. Brown and Narutaka Ozawa},
	title = {{$\mathrm{C}^*$}-algebras and finite-dimensional approximations},
	series = {Graduate Studies in Mathematics},
	volume = {88},
	year = {2008},
	publisher = {American Mathematical Society},
	address = {Providence}
}

@unpublished{classification,
    title = {Classifying *-homomorphisms {I}: {U}nital simple nuclear {$\mathrm{C}^*$}-algebras},
    author = {Jos{\'e} Carrión and James Gabe and Christopher Schafhauser and Aaron Tikuisis and Stuart White},
    note = {Preprint, arXiv:2307.06480},
    year = {2023}
}

@book{Dix81,
    AUTHOR = {Dixmier, J.},
    TITLE = {von {N}eumann Algebras},
    PUBLISHER = {North-Holland Publishing Company},
    YEAR = {1981},
    ISBN = {0-444-86308-7}
}

@book{Farah2019,
    title = {Combinatorial Set Theory of {$\mathrm{C}^*$}-algebras},
    author = {Ilijas Farah},
    series = {Springer Monographs in Mathematics},
    doi = {10.1007/978-3-030-27093-3},
    publisher = {Springer},
    address = {Cham},
    year = {2019}
}

@article{Farah2023,
	author = {Ilijas Farah},
	title = {Quantifier elimination in {$\mathrm{II}_1$} factors},
    journal = {M{\"u}nster Journal of Math},
	year = {To appear}
}

@article{FHS2014a,
    AUTHOR = {Farah, I. and Hart, B. and Sherman, D.},
    TITLE = {Model theory of operator algebras {II}: {M}odel theory},
    JOURNAL = {Israel Journal of Mathematics},
    YEAR = {2014},
    VOLUME = {201},
    NUMBER = {1},
    PAGES = {477-505},
    DOI = {10.1007/s11856-014-1046-7}
}

@article{GJNS2022,
    title={Duality for optimal couplings in free probability},
    author={Gangbo, Wilfrid and Jekel, David and Nam, Kyeongsik and Shlyakhtenko, Dimitri},
    journal={Communications in Mathematical Physics},
    volume={396},
    number={3},
    pages={903-981},
    year={2022},
    publisher={Springer}
}

@article{GJ25,
    author = {David Gao and David Jekel},
    title = {Elementary equivalence and disintegration of tracial von {N}eumann algebras},
    journal = {Forum of Mathematics, Sigma},
    volume = {13},
    pages = {e105},
    year = {2025}
}

@incollection{GH2023,
	author = {Isaac Goldbring and Bradd Hart},
	title = {A survey on the model theory of tracial von
	{N}eumann algebras},
	pages = {133-157},
	year = {2023},
	booktitle = {Model Theory of Operator Algebras},
	editor = {Isaac Goldbring},
	publisher = {DeGruyter},
	address = {Berlin, Boston},
	doi = {10.1515/9783110768282-003}
}

@incollection{Hart2023,
	author = {Bradd Hart},
	title = {An introduction to continuous model theory},
	pages = {83-131},
	year = {2023},
	booktitle = {Model Theory of Operator Algebras},
	editor = {Isaac Goldbring},
	publisher = {DeGruyter},
	address = {Berlin, Boston},
	doi = {10.1515/9783110768282-003}
}

@article{Jun07,
    TITLE = {Amenability, tubularity, and embeddings into {$R^\omega$}},
    AUTHOR = {Jung, K.},
    JOURNAL = {Mathematische Annalen},
    YEAR = {2007},
    VOLUME = {338},
    PAGES = {241–248},
    DOI = {10.1007/s00208-006-0074-y}
}

@article{KazhdanTDef,
    AUTHOR = {Ka{\v{z}}dan, David A.},
     TITLE = {On the connection of the dual space of a group with the
              structure of its closed subgroups},
   JOURNAL = {Funkcional. Anal. i Prilo\v{z}en.},
  FJOURNAL = {Akademija Nauk SSSR. Funkcional\cprime nyi Analiz i ego Prilo\v{z}enija},
    VOLUME = {1},
      YEAR = {1967},
     PAGES = {71-74},
      ISSN = {0374-1990},
   MRCLASS = {22.20},
  MRNUMBER = {0209390},
MRREVIEWER = {K. Strambach},
}

@article{McD69a,
    TITLE = {A countable infinity of {$\mathrm{II}_1$} factors},
    AUTHOR = {McDuff, D.},
    JOURNAL = {Annals of Mathematics},
    YEAR = {1969},
    VOLUME = {90},
    NUMBER = {2},
    PAGES = {361-371},
    DOI = {10.2307/1970729}
}

@article{McD69b,
    TITLE = {Uncountably many {$\mathrm{II}_1$} factors},
    AUTHOR = {McDuff, D.},
    JOURNAL = {Annals of Mathematics},
    YEAR = {1969},
    VOLUME = {90},
    NUMBER = {2},
    PAGES = {372-377},
    DOI = {10.2307/1970730}
}

@article {OzawaUnivSep,
    AUTHOR = {Ozawa, Narutaka},
     TITLE = {There is no separable universal {$\mathrm{II}_1$}-factor},
   JOURNAL = {Proc. Amer. Math. Soc.},
  FJOURNAL = {Proceedings of the American Mathematical Society},
    VOLUME = {132},
      YEAR = {2004},
    NUMBER = {2},
     PAGES = {487-490},
      ISSN = {0002-9939},
   MRCLASS = {46L10 (20F65)},
  MRNUMBER = {2022373},
MRREVIEWER = {Paul Jolissaint},
       DOI = {10.1090/S0002-9939-03-07127-2},
       URL = {https://doi.org/10.1090/S0002-9939-03-07127-2},
}

@article{Ozawa04,
    author = {Ozawa, N.},
    title = {About the {QWEP} conjecture},
    journal = {International Journal of Mathematics},
    volume = {15},
    number = {5},
    pages = {501-530},
    year = {2004}
}

@book{Tak79,
    AUTHOR = {Takesaki, M.},
    TITLE = {Theory of operator algebras {I}},
    PUBLISHER = {Springer-Verlag, New York, NY},
    YEAR = {1979},
    ISBN = {978-1-4612-6190-2}
}

\end{document}